\theoremstyle{plain}
\newtheorem{thm}{Theorem}
\newtheorem{cor}[thm]{Corollary}
\newtheorem{lem}[thm]{Lemma}
\newtheorem{prop}[thm]{Proposition}
\newtheorem*{Theorem*}{thm}
\theoremstyle{remark}
\newtheorem{rmk}[thm]{Remark}
\newtheorem*{rmk*}{Remark}
\DeclareMathOperator{\Tor}{Tor}
\DeclareMathOperator{\UU}{U}
\DeclareMathOperator{\pt}{pt}
\DeclareMathOperator{\ind}{ind}
\newcommand{\frs}{\mathfrak{s}}
\newcommand{\HMbar}{\overline{\text{HM}}}
\newcommand{\HFinfty}{\text{HF}^\infty}
\newcommand{\HPbar}{\overline{\text{HP}}}
\begin{document}

\title[$U$--cyclic elements in Floer homology]{A note on $U$--cyclic elements in monopole Floer homology}

\begin{abstract}

Edtmair--Hutchings have recently defined, using Periodic Floer homology (PFH), a ``$U$--cycle property'' for Hamiltonian isotopy classes of area-preserving diffeomorphisms of closed surfaces. They show that every Hamiltonian isotopy class satisfying the $U$--cycle property satisfies the smooth closing lemma and also satisfies a kind of Weyl law involving the actions of certain periodic points; they show that every rational isotopy class on the two-torus satisfies the $U$--cycle property.  It seems that, in general, not much is known about the U-module structure on PFH.  Here we consider a version of Seiberg-Witten Floer cohomology which is known by work of Lee-Taubes to be isomorphic, as a U-module, to the periodic Floer homology in sufficiently high degree.  We show that the analogous $U$--cycle property holds for every rational Hamiltonian isotopy class on any closed surface and, more generally, for any non-torsion spin-c structure. On the other hand, we also show that a rational isotopy class may contain elements that are not $U$--cyclic.  By the Lee-Taubes isomorphism, the same results hold for PFH.  Our results are some of the first computations concerning the U-module structure on these theories.

\end{abstract}

\author[Cristofaro-Gardiner]{Dan Cristofaro-Gardiner}
\address{University of Maryland\\College Park, MD 20742}
\author[Pomerleano]{Daniel Pomerleano}
\address{University of Massachusetts\\Boston, MA 02125}
\author[Prasad]{Rohil Prasad}
\address{Princeton University\\Princeton, NJ 08544}
\author[Zhang]{Boyu Zhang}
\address{Princeton University\\Princeton, NJ 08544}

\date{}

\maketitle

\section{Introduction}

Let $Y$ be a closed three-manifold.  Let $\frs$ be a spin-c structure over $Y$ such that $c_1(\frs)$ is not a torsion element. Kronheimer--Mrowka \cite[Section 29]{monopolesBook} have defined various versions of monopole Floer homology with non-exact perturbations. To explain the variant of interest to us here, recall from \cite{monopolesBook} that every non-exact perturbation has a \emph{period class} $c\in H^2(Y;\mathbb{R})$. 
A perturbation is called \emph{monotone} if its period class $c$ satisfies 
$$2\pi^2c_1(\frs) + c = t\,2\pi^2c_1(\frs) \in H^2(Y;\mathbb{R})$$
for some $t\in \mathbb{R}$. 
 It is called \emph{positively monotone}, \emph{negatively monotone}, or \emph{balanced} if $t>0$, $t<0$, or $t=0$ respectively (see \cite[Definition 29.1.1]{monopolesBook}).  

Here we are interested in the ``bar-version" $\HMbar^{*}(Y,\frs,c_b)$ of monopole Floer cohomology with balanced perturbation as defined in \cite[Section 30.1]{monopolesBook}. Let $2N$ be the divisibility of $c_1(\frs)$, then the group $\HMbar^{*}(Y,\frs,c_b)$ is equipped with a relative $\mathbb{Z}/2N$--grading, and there is a U-map defined on $\HMbar^{*}(Y,\frs,c_b)$ with relative degree $+2$ (see \cite[Section 25]{monopolesBook}).  The work of Eismeier-Lin provides a computation of $\HMbar^{*}$ in this case  in terms of the ``extended cup homology" of $Y$, see \cite[Thm. 6.4]{lin2021monopole}; however, this does not give a complete picture of the $U$-module structure directly, and this is the question of interest to us here.  

One motivation for studying the $U$-map structure is a recent result of Edtmair--Hutchings, leveraging ``$U$--cyclic classes", which we now introduce. Suppose $\Sigma$ is a closed oriented surface endowed with an area form $\omega_\Sigma$, suppose $\phi$ is an area-preserving diffeomorphism of $\Sigma$, and fix a class $\Gamma \in H_1(Y_\phi)$, where $M_{\phi}$ denotes the corresponding mapping torus.  Assume that $\Gamma$ is {\em monotone}, meaning that $c_1(V) + 2 PD(\Gamma)$ is in the span of $[\omega]$, where $V$ denotes the vertical tangent bundle and $\omega$ is the two-form induced from the area form on $\Sigma$.  Then the {\em periodic Floer homology} $\overline{HP}(\phi,\Gamma;R)$ is defined.  It is the homology of a chain complex generated by certain sets of Reeb orbits, with coefficients in $R$, relative to a differential counting certain pseudoholomorphic curves.  It has a distinguished map with relative degree $-2$, defined by counting certain index two psuedoholomorphic curves, called the $U$-map.  We now say that an element $\sigma \in \overline{HP}(\phi,\Gamma;R)$ is {\em $U$--cyclic} if $U^k \sigma = \sigma$ for some positive integer $k$, and we say that 
$\phi$ {\em satisfies the $U$--cycle property}\footnote{Strictly speaking, the definition in Edtmair--Hutchings is slightly less restrictive than this: they have a family of variants of PFH, indexed by subgroups $G \subset \text{Ker}([\omega_\phi])$, and require just one of these variants to have $U$--cyclic classes.  Our definition here is equivalent to the case $G = \text{Ker}([\omega_\phi]),$ by \cite[Lem. 2.23]{edtmairHutchings}.} if there are $U$--cyclic elements in $\overline{HP}(\phi,\Gamma;\mathbb{Z}/2)$ for $\Gamma \in H_1(M_\phi)$ of arbitrarily large degree.  We will not need to know much more about PFH in the present work, but for the reader curious to know more we refer to the discussions in, for example, \cite{cristofaro2021smooth,edtmairHutchings}.  

The work of Edtmair and Hutchings \cite{edtmairHutchings} proves important properties of $U$--cyclic elements.  
They proved a Weyl asymptotic formula for the PFH spectral invariants associated to $U$--cyclic classes; for brevity, we will not recall the precise statement here, referring the reader to \cite{edtmairHutchings} instead.  They also prove\footnote{Independently and simultaneously, a Weyl law for the PFH spectral invariants, without any $U$--cyclic condition, was proved in \cite{cristofaro2021smooth} for all rational isotopy classes of area-preserving maps, and the closing lemma for general area-preserving diffeomorphisms was proved.} various closing lemmas for any Hamiltonian isotopy class satisfying the $U$--cycle property; closing lemmas, in particular, have long attracted considerable interest and we refer the reader to Remark~\ref{rmk:summary} below for further discussion.  This leads to a natural question: how plentiful are $U$--cyclic elements? 

By an isomorphism of Lee-Taubes \cite[Theorem 1.2, Corollary 1.5]{LeeTaubes12}, when $\Gamma$ is monotone with degree greater than $\max\{2g-2,0\}$, where $g$ is the genus of $\Sigma$, we have
\begin{equation}
\label{eqn_iso_HP_HMbar}
	\HPbar_*(\phi,\Gamma;\mathbb{Z})\cong \HMbar^{-*}(M_\phi,\mathfrak{s}_\Gamma,c_b;\mathbb{Z}),
\end{equation}
with all of the above isomorphisms preserving the U-action.  By Universal Coefficients, it follows from this that the same result holds with any coefficient ring $R$.  Here, $\mathfrak{s}_\Gamma$ denotes the spin-c structure whose first Chern class is $c_1(V) + 2 \text{PD}(\Gamma).$   In particular, one can understand the $U$-module structure on PFH by studying the corresponding structure on monopole Floer.  

A better understanding of the $U$-map is also the subject of Question 3 in the aforementioned Eismeier-Lin work \cite{lin2021monopole}.  

With these preliminaries and motivations now behind us, let us now return to discussing the $U$-module structure on $\HMbar^{-*}(Y,\mathfrak{s}_\Gamma,c_b;R)$.    It would be interesting to have a comprehensive understanding of this structure and this indeed seems a worthy question for further study.  We content ourselves in this short note here with some first results that are natural to wonder about given the Edtmair--Hutchings work explained above.

To state our first result, in analogy with above, we define an element $\sigma$ of $\HMbar^{-*}(Y,\mathfrak{s},c_b;R)$ to be {\em $U$--cyclic} if there exists $k\in\mathbb{Z}^+$ such that $U^k \sigma = \sigma$.  

\begin{thm}
\label{thm_U_cycle}
	For any $(Y,\mathfrak{s})$ with $\mathfrak{s}$ non-torsion, and coefficient ring $R$, the $U$-module 
	\[\HMbar^{*}(Y,\mathfrak{s}_\Gamma,c_b;R)\]
	always contains non-zero $U$--cyclic classes. 
\end{thm}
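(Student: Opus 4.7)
The plan is to combine the Eismeier--Lin computation of $\HMbar^{*}(Y,\mathfrak{s},c_b;R)$ as the extended cup homology of $(Y, c_1(\mathfrak{s}))$ with a finiteness-plus-invertibility argument for the $U$-action. I would first handle the case of a finite coefficient field and then bootstrap to an arbitrary coefficient ring.

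In the extended cup homology model, $\HMbar^{*}$ is the cohomology of an explicit complex built from $H^{*}(Y;R)$, a formal degree-$2$ Laurent variable $U$, and a differential involving cup product with $c_{1}(\mathfrak{s})$. For our purposes the key structural features are: (i) $\HMbar^{*}$ carries a relative $\mathbb{Z}/2N$-grading; (ii) each graded piece is a finitely generated $R$-module (thanks to compactness of the perturbed Seiberg--Witten moduli for balanced perturbations, or, equivalently, directly from the cup-homology description); and (iii) the $U$-map acts as an automorphism, which is evident from the Laurent-polynomial structure of the cup-homology complex (the ``bar'' version is set up to invert $U$). Over $R = \mathbb{F}_{p}$, features (i) and (ii) imply that the total $\HMbar^{*}$ is a finite-dimensional $\mathbb{F}_{p}$-vector space, while (iii) places $U$ inside the finite group $\mathrm{GL}(\HMbar^{*})$. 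Hence $U$ has finite order $k \geq 1$, so $U^{k} = \id$, and since $\HMbar^{*}$ is non-zero by Eismeier--Lin (for example, the unit class coming from $H^{0}(Y;\mathbb{F}_{p})$ is a non-trivial cycle), every non-zero class is a non-zero $U$-cyclic element.

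For a general coefficient ring $R$, I would work directly in the extended cup homology over $R$ and exhibit an explicit $U$-cyclic cycle. A natural candidate is a class supported on the kernel of $c_{1}(\mathfrak{s})\cup$ acting on $H^{*}(Y;R)$, for which $U$-cyclicity can be checked by producing an explicit coboundary of the form $U^{-1}\bigl(c_{1}(\mathfrak{s}) \cup \beta\bigr)$ realizing the difference $U^{k}\cdot \sigma - \sigma$; this is possible because $c_{1}(\mathfrak{s})$ is non-torsion and the relevant cup-product pairing on $H^{*}(Y)$ is non-degenerate by Poincar\'e duality. Alternatively, one can use a universal coefficient argument together with the integral case handled via the torsion part of $\HMbar^{*}(Y, \mathfrak{s}; \mathbb{Z})$, on which $U$ must act with finite order since the automorphism group of a finite abelian group is itself finite.

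The main obstacle will be the arbitrary-$R$ step: producing a single cycle whose $U$-orbit is provably finite over any ring. Over a finite field the pigeonhole argument is immediate, but over $\mathbb{Z}$ or $\mathbb{Q}$ one must choose the cycle carefully and verify $U^{k}$-invariance at the chain level. I expect this verification to be short and to rely only on the non-torsion hypothesis on $c_{1}(\mathfrak{s})$ together with Poincar\'e duality; the Eismeier--Lin identification is what reduces the Floer-theoretic question to a tractable piece of algebra on ordinary cohomology.
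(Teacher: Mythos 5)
Your finite-field argument is fine as far as it goes, but the proposal has two genuine gaps, and the second one is fatal for the general statement.

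First, you repeatedly read the $U$--action off of the extended cup homology model: you take invertibility of $U$ from ``the Laurent-polynomial structure'' and you plan to verify $U^k$--invariance ``at the chain level'' in that model. But the Eismeier--Lin isomorphism is not known to identify the $U$--module structure --- that is precisely the open point motivating this paper --- so nothing about how $U$ acts can be transferred from the cup-homology complex without further argument. The actual content of the paper's direct proof is to pin down the $U$--action on the Kronheimer--Mrowka coupled Morse model: with respect to the Morse-index filtration on the complex $C_*(\mathbb{T},f;\Gamma^R_{\xi_1})$, one shows $U = T^{-1} + (\text{lower filtration})$, and $T^N=1$ on the $E^2$ page $H_*(\mathbb{T};\Gamma^R_{\xi_1})$. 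This is what justifies both invertibility of $U$ and everything downstream; it is not available for free. (Two smaller points: finite generation of the homology does not follow from ``compactness of the moduli'' --- the balanced-perturbation chain complex has infinitely many generators in each degree --- but from the computation itself; and the non-vanishing $\HMbar_*\neq 0$ is a non-trivial cited input, not an immediate consequence of a ``unit class'' surviving, since the differential involves cup product with the non-torsion class $c_1(\mathfrak{s})$.)

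Second, and more seriously, the arbitrary-$R$ step cannot be repaired along the lines you sketch. Your fallback --- that $U$ has finite order because automorphism groups of finite abelian groups are finite --- only controls the torsion of $\HMbar_*(Y,\frs,c_b;\mathbb{Z})$; on the free part $U$ lies in $\mathrm{GL}_n(\mathbb{Z})$, which contains elements of infinite order, and indeed Theorem~\ref{not_U_cycle} of the paper exhibits integral classes that are \emph{not} $U$--cyclic, so no argument proving ``$U$ has finite order on $\HMbar_*(\cdot;\mathbb{Z})$'' can succeed. The statement you actually need is the weaker nilpotence $(1-U^N)^{b_1(Y)+1}\,\HMbar_* = 0$, which coexists with non-cyclic classes while still forcing a non-zero $U^N$--fixed element (take the last non-vanishing image of $(1-U^N)$, or the lowest non-trivial filtration level). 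The paper gets this either from the filtration argument above or, indirectly, from $(1-U^N)^l\,\HFinfty(Y,\frs;\mathbb{Z})=0$ via Kutluhan--Lee--Taubes plus universal coefficients. Without some statement of this nilpotence type, your proposal proves the theorem only for $R=\mathbb{F}_p$ and leaves $R=\mathbb{Z}$, $R=\mathbb{Q}$, and general $R$ open.
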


In fact, we give two proofs of this result.  The proof that we are mainly interested in here is  a direct and relatively short proof, building on work of Kronheimer--Mrowka, that stays entirely within the realm of Seiberg-Witten Floer theory.  One can alternatively give a very short but extremely indirect proof of the above theorem as a consequence of the nonvanishing theorem in \cite{cristofaro2021smooth}, together with a known computation of Heegaard Floer homology and the Kutluhan--Lee--Taubes isomorphism between monopole Floer homology and Heegaard Floer homology proved in \cite{kutluhan2020hf} --- we give the  argument in Section \ref{subsec_U-cycle_property} below, for completeness, as well.   
Since a rational Hamiltonian isotopy class gives rise to monotone classes $\Gamma$ of arbitrarily high degree (see e.g. \cite[Section 1.1.2]{cristofaro2021smooth}), we obtain in particular the following corollary by the Lee--Taubes isomorphism \eqref{eqn_iso_HP_HMbar}.

\begin{cor}
\label{cor_U_cycle}
	The $U$--cycle property holds for every rational Hamiltonian isotopy class of area-preserving diffeomorphisms on $\Sigma$.
\end{cor}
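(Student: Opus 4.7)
The plan is to transport Theorem \ref{thm_U_cycle} across the Lee--Taubes isomorphism \eqref{eqn_iso_HP_HMbar}. By definition, a rational Hamiltonian isotopy class admits monotone classes $\Gamma \in H_1(M_\phi)$ of arbitrarily high degree, as recalled in \cite[Section 1.1.2]{cristofaro2021smooth}. For any such $\Gamma$ with $\deg\Gamma > \max\{2g-2,\,0\}$, the Lee--Taubes isomorphism supplies a $U$--equivariant identification
$$\HPbar_{*}(\phi,\Gamma;\mathbb{Z}/2)\;\cong\;\HMbar^{-*}(M_\phi,\mathfrak{s}_\Gamma,c_b;\mathbb{Z}/2),$$
so producing a nonzero $U$--cyclic class on the PFH side reduces to producing one on the monopole side.

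Before invoking Theorem \ref{thm_U_cycle}, I would check that $\mathfrak{s}_\Gamma$ is non-torsion whenever $\deg\Gamma$ is sufficiently large. By monotonicity, $c_1(\mathfrak{s}_\Gamma) = c_1(V) + 2\,\mathrm{PD}(\Gamma) = t\,[\omega]$ for some $t \in \mathbb{R}$. Pairing both sides with the fiber class $[\Sigma] \in H_2(M_\phi)$ identifies $t$ with $(2 - 2g + 2\deg\Gamma)/\mathrm{Area}(\Sigma)$, which is strictly positive as soon as $\deg\Gamma > g-1$. Since $[\omega]$ is itself non-torsion in $H^2(M_\phi;\mathbb{R})$ (it pairs nontrivially with $[\Sigma]$), it follows that $c_1(\mathfrak{s}_\Gamma) = t\,[\omega]$ is non-torsion as well. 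This is the one place where the hypotheses of Theorem \ref{thm_U_cycle} need to be checked, and it is essentially forced by the definition of monotonicity together with the high-degree assumption.

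With non-torsionality verified, I would apply Theorem \ref{thm_U_cycle} with $R = \mathbb{Z}/2$ to obtain a nonzero $U$--cyclic element of $\HMbar^{*}(M_\phi, \mathfrak{s}_\Gamma, c_b; \mathbb{Z}/2)$, then pull it back through the $U$--equivariant Lee--Taubes isomorphism to obtain a nonzero $U$--cyclic element of $\HPbar_{*}(\phi, \Gamma; \mathbb{Z}/2)$. Executing this construction along a sequence of monotone $\Gamma$ with unbounded degree yields the $U$--cycle property in the sense of the introduction. Since all of the genuine analytic work is already packaged into Theorem \ref{thm_U_cycle} and into the Lee--Taubes isomorphism, I do not expect any serious obstacle in the corollary itself; the proof is essentially a bookkeeping exercise combining the two, with the non-torsionality check being the only non-formal step.
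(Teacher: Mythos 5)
Your proposal is correct and follows the same route as the paper, which deduces the corollary directly from Theorem \ref{thm_U_cycle}, the existence of monotone classes $\Gamma$ of arbitrarily high degree for rational isotopy classes, and the $U$--equivariant Lee--Taubes isomorphism \eqref{eqn_iso_HP_HMbar}. Your explicit check that $\mathfrak{s}_\Gamma$ is non-torsion (equivalently, that $\langle c_1(\mathfrak{s}_\Gamma),[\Sigma]\rangle = 2(d-g+1) \neq 0$ once $d > \max\{2g-2,0\}$) is a detail the paper leaves implicit, and is a worthwhile addition.
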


\begin{rmk}
\label{rmk:summary}
For the convenience of readers who are not specialists, we briefly summarize some of the background regarding closing lemmas.  The basic question here (stated, for expository simplicity, in the context of conservative dynamics) is as follows: for a generic volume-preserving diffeomorphism of a closed manifold, is the union of periodic points dense?  In the $C^1$--topology, this statement was proved by Pugh and Robinson \cite{pugh1983c1}.  In general, proving similar statements in higher regularity has been a long-standing open problem, and it is the subject (without the assumption of conservative dynamics) of Problem \#10 in Smale's  problem list \cite{smaleProblemList}.  

As mentioned above, the Edtmair--Hutchings result implies a resolution in the case of area-preserving maps of surfaces, under the $U$--cycle assumption that is resolved by Corollary~\ref{cor_U_cycle}; for a major antecedent breakthrough on this problem, see \cite{asaokairie}.  For maps in rational isotopy classes, Edtmair--Hutchings also prove some interesting quantitative results, estimating the time it takes for periodic orbits of a particular period to appear under a Hamiltonian isotopy in a fixed open set, see for example \cite{edtmairHutchings}[Thm. 7.4] for the most general statement of this form.  While, as mentioned above, \cite{cristofaro2021smooth} provides an alternate approach to addressing the closing lemma, it does not address these quantitative results. The constants in Edtmair--Hutchings' quantitative results depend on the minimum degree needed to get $U$--cyclic elements. Equation \eqref{eqn_iso_HP_HMbar} and Theorem \ref{thm_U_cycle} imply that $U$--cyclic elements exist whenever $\Gamma$ is monotone and the degree of $\Gamma$ is greater than $\max\{2g-2,0\}$.

For more about closing lemmas, we refer the reader to the discussions in \cite{cristofaro2021smooth, edtmairHutchings}, as well as the very useful notes for the Bourbaki seminar \cite{humiliere2019lemme}.  
\end{rmk}

Given Theorem~\ref{thm_U_cycle}, it is natural to ask if {\em every} class is $U$--cyclic, at least in high enough degree; this is also a natural question on the PFH side given the spectral asymptotics associated to $U$--cyclic classes in the work of Edtmair--Hutchings as explained above. Our second theorem answers this question in the negative.

\begin{thm}
\label{not_U_cycle}

Suppose $Y$ is a closed oriented three-manifold with $b_1(Y) = 3$. Let $a_1,a_2,a_3$ be generators of $H^1(Y;\mathbb{R})$, and assume 
$$a_1\cup a_2\cup a_3 \neq 0 \in H^3(Y;\mathbb{R}).$$   Let $\frs$ be a non-torsion spin-c structure over $Y$.  Then there exist infinitely many classes $\sigma \in \HMbar^{*}(Y,\frs,c_b;\mathbb{Z})$ that are not $U$--cyclic.
\end{thm}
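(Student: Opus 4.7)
My plan is to apply the Eismeier--Lin computation \cite[Thm. 6.4]{lin2021monopole} of $\HMbar^{*}(Y,\frs,c_b;\mathbb{Z})$ in terms of the ``extended cup homology'' of $Y$ and to analyze the $U$-action on the resulting model by hand. Under the hypothesis $b_1(Y) = 3$, the cohomology ring $H^*(Y;\mathbb{R})$ is generated in degree one by $a_1,a_2,a_3$, with the top class realized by $a_1 \cup a_2 \cup a_3$. The first step is to write the Eismeier--Lin complex explicitly in this situation as a $\mathbb{Z}[U,U^{-1}]$-module built out of wedges of the $a_i$ tensored with powers of $U$, with differentials and $U$-action encoding cup product with (a multiple of) $c_1(\frs)$; some care is needed to track the relative $\mathbb{Z}/2N$-grading, under which $U$ has degree $+2$ so that $U^k\sigma=\sigma$ forces $k$ to be a multiple of $N$.

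The crux is to exploit the non-degeneracy of the triple cup product to exhibit a class whose $U$-orbit is not eventually periodic. Because $\frs$ is non-torsion, the functional $c_1(\frs)\cup(-)\colon H^1(Y;\mathbb{Q})\to H^3(Y;\mathbb{Q})$ is nonzero, and the hypothesis $a_1\cup a_2\cup a_3\neq 0$ means this nonvanishing is detectable via pairing with the top class. I would construct a cocycle $\sigma$ in the extended cup complex, for instance of the form $a_i\wedge a_j$ tensored with a fixed power of $U$ and adjusted as necessary to close up, and verify directly that $U^k\sigma-\sigma$ represents a nonzero class for every $k\geq 1$, with the obstruction controlled by the triple cup product. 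The main obstacle I anticipate is making the $U$-action on the Eismeier--Lin model explicit enough to carry out this verification: this is where the triple-cup-product hypothesis does its essential work, and pinning it down will require some unpacking of the construction in \cite{lin2021monopole} and of how the $U$-map on $\HMbar^{*}$ translates through the Eismeier--Lin isomorphism.

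Finally, to upgrade from one non-$U$-cyclic class to infinitely many, I would pass to integer multiples. If $\sigma$ is non-$U$-cyclic and its image in $\HMbar^{*}(Y,\frs,c_b;\mathbb{Z})$ generates a torsion-free summand, then $n\sigma$ for $n\in\mathbb{Z}\setminus\{0\}$ are all distinct classes and each satisfies $U^k(n\sigma) = n U^k\sigma \neq n\sigma$ for all $k\geq 1$. Any residual torsion issue can be handled by an elementary argument, for example by restricting to integers $n$ coprime to the orders of the finitely many relevant torsion elements appearing among the $U^k\sigma-\sigma$ for small $k$, or by taking rational coefficients first and then lifting. Putting these three steps together yields the infinite family of non-$U$-cyclic classes asserted in the statement.
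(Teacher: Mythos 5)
Your overall strategy---compute $\HMbar^{*}$ via an explicit algebraic model governed by the triple cup product, identify the $U$-action on that model, and exhibit classes killed by $(T^N-1)^2$ but not by any $T^k-1$---is the same shape as the paper's argument. But the step you flag as ``the main obstacle,'' namely making the $U$-action on the model explicit, is not a technical detail to be unpacked later: it is the actual mathematical content of the proof, and your proposal contains no idea for how to supply it. The paper itself warns in the introduction that the Eismeier--Lin computation ``does not give a complete picture of the $U$-module structure directly.'' What the paper does is prove (Proposition~\ref{prop_U=T^-1_dim_le_3}) that for coupled Morse homology pulled back from the canonical family on $\UU(2)$ over a base of dimension $\le 3$, one has $U=T^{-1}$ after suitable homotopies. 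This requires a genuinely geometric argument: homotoping the family so that it splits over a tubular neighborhood $N(\UU(1))$, choosing the section $s$ of the dual bundle so that $s(e)\neq 0$ on eigenvectors, homotoping the classifying map $v$ so that the index-$2$ Morse flow loci land in $N(\UU(1))$, and then checking that the only contributions to the $U$-map are the ``constant'' flow lines, which give $T^{-1}$. There is a further subtlety (Remark~\ref{rmk_homotopy_coupled_Morse}): coupled Morse homology is only known to be homotopy-invariant as a $\mathbb{Z}[U]$-module, not as a $\mathbb{Z}[T]$-module, so one must check that the homotopies used to prove $U=T^{-1}$ are compatible with those used to compute the complex as the cokernel of the explicit matrix~\eqref{eqn_b1(Y)=3_differential}. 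None of this is addressed, or even gestured at, in your proposal.

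Two smaller points. First, even granting $U=T^{-1}$, showing that a class with $(T^N-1)^2\sigma=0$ satisfies $(T^k-1)\sigma\neq 0$ for \emph{every} nonzero $k$ (not just multiples of $N$) takes a short but real argument; the paper uses that $\gcd((T^N-1)^2,\,T^{|k|}-1)=T^{\ell}-1$ with $\ell=\gcd(N,|k|)$ in $\mathbb{Q}[T]$, clears denominators, and derives a contradiction with $n(T^N-1)\sigma\neq 0$. Second, your plan to produce infinitely many classes by taking multiples $n\sigma$ is shakier than you suggest: $U^k\sigma\neq\sigma$ does not imply $nU^k\sigma\neq n\sigma$, and your proposed fixes (rational coefficients, coprimality to torsion orders) would need the explicit module structure anyway. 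In the paper this is handled at no extra cost because the cokernel description shows directly that $n(T^N-1)\sigma_1\neq 0$ for all nonzero $n$, so each $\sigma_i=i\sigma_1$ is non-cyclic by the same gcd argument.
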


Regarding PFH, we obtain the following corollary, which gives a simple example where there is an abundance of non $U$--cyclic classes.

\begin{cor}
\label{no_U_cycle}
	Let $\phi$ be any area-preserving diffeomorphism of $T^2$ that is Hamiltonian isotopic to the identity, and let $\Gamma \in H_1(M_\phi)$ be monotone, of positive degree.  Then there are classes in $\overline{HP}(\phi,\Gamma;\mathbb{Z})$ which are not $U$--cyclic.
\end{cor}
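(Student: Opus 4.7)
\textbf{Proof plan for Corollary \ref{no_U_cycle}.} The strategy is simply to reduce to Theorem~\ref{not_U_cycle} via the Lee--Taubes isomorphism~\eqref{eqn_iso_HP_HMbar}, so the main task is to verify that its hypotheses are satisfied for the mapping torus $M_\phi$ when $\phi$ is Hamiltonian isotopic to the identity on $T^2$.

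First I would note that because $\phi$ is (Hamiltonian) isotopic to the identity through area-preserving diffeomorphisms, the mapping torus $M_\phi$ is diffeomorphic to the trivial mapping torus of the identity, namely $T^2 \times S^1 = T^3$. In particular $b_1(M_\phi) = 3$, and the standard generators $a_1, a_2, a_3$ of $H^1(T^3;\mathbb{R})$ satisfy $a_1 \cup a_2 \cup a_3 \neq 0$ since their cup product generates $H^3(T^3;\mathbb{R}) \cong \mathbb{R}$. This matches the topological hypothesis in Theorem~\ref{not_U_cycle}.

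Next I would check that the spin-c structure $\frs_\Gamma$ is non-torsion whenever $\Gamma$ has positive degree. The vertical tangent bundle $V$ is the pullback of $T(T^2)$, which is trivial, so $c_1(V) = 0$ and hence $c_1(\frs_\Gamma) = 2\,\mathrm{PD}(\Gamma) \in H^2(T^3;\mathbb{Z})$. Since $H^*(T^3;\mathbb{Z})$ is torsion-free and the degree of $\Gamma$ agrees with the pairing of $\mathrm{PD}(\Gamma)$ against the fiber class, positivity of the degree forces $\mathrm{PD}(\Gamma) \neq 0$ and therefore $c_1(\frs_\Gamma)$ to be non-torsion. Thus Theorem~\ref{not_U_cycle} applies and produces infinitely many non-$U$--cyclic classes in $\HMbar^{*}(M_\phi, \frs_\Gamma, c_b; \mathbb{Z})$.

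Finally I would invoke the Lee--Taubes isomorphism. Since the genus of $T^2$ is $g=1$, we have $\max\{2g-2,0\} = 0$, so the hypothesis ``degree of $\Gamma$ greater than $\max\{2g-2,0\}$'' in \eqref{eqn_iso_HP_HMbar} is exactly the positive-degree assumption in the corollary. The isomorphism intertwines the two $U$-actions, so it carries non-$U$--cyclic classes to non-$U$--cyclic classes. Transporting the classes produced by Theorem~\ref{not_U_cycle} across \eqref{eqn_iso_HP_HMbar} yields elements of $\overline{HP}(\phi,\Gamma;\mathbb{Z})$ that are not $U$--cyclic. There is no significant obstacle here; the entire argument is a verification that the topological and monotonicity hypotheses of Theorem~\ref{not_U_cycle} and of~\eqref{eqn_iso_HP_HMbar} are automatically satisfied in this setting.
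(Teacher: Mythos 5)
Your proposal is correct and is exactly the argument the paper intends: identify $M_\phi$ with $T^3$ (so $b_1=3$ and the triple cup product is nonzero), note $c_1(\frs_\Gamma)=2\,\mathrm{PD}(\Gamma)$ is non-torsion for positive degree, observe that $\max\{2g-2,0\}=0$ for $g=1$ so the Lee--Taubes isomorphism applies, and transport the non-$U$--cyclic classes of Theorem~\ref{not_U_cycle} across the $U$--equivariant isomorphism~\eqref{eqn_iso_HP_HMbar}.
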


The above Theorem is certainly sensitive to the choice of coefficient ring $R$.  Indeed, our aforementioned proof of  Theorem~\ref{thm_U_cycle} shows that
\[ (1-U^{d-g + 1})^{b_1(Y)+1} \HMbar^{*}(M_{\phi},\mathfrak{s}_{\Gamma},c_b;R) = 0.\]
It was first pointed out to us by Edtmair \cite{edtmair} that this implies that every class is $U$--cyclic when $R = \mathbb{Z}/2$; a similar argument gives the same conclusion for some other rings, for example $R = \mathbb{Z}/p.$

\begin{rmk}

It seems interesting to us to prove  Corollary~\ref{no_U_cycle}, or a more general version, directly, that is without using the isomorphism \eqref{eqn_iso_HP_HMbar}; we refer the reader to the work of Nelson--Weiler \cite{nelson2020embedded} for some direct computations regarding the related embedded contact homology (ECH) in interesting cases.

\end{rmk}

\textbf{Acknowledgements: } We thank Oliver Edtmair for helpful correspondence concerning an earlier version of this draft.  D.C-G. thanks the National Science Foundation for their support under Awards \#1711976 and \#2105471. D. P.  is partially supported by the Simons Collaboration in Homological Mirror Symmetry, Award \#652299.  R.P. is supported by the NSF (Graduate Research Fellowship) under Award \#DGE--1656466. B.Z. would like to thank Mike Miller Eismeier, Francesco Lin, and Jianfeng Lin for helpful discussions.
 
\section{Proofs}

We now prove our two theorems.

\subsection{The $U$--cycle property}
\label{subsec_U-cycle_property}
Throughout this section, let $Y$ be a closed oriented 3-manifold and let $\mathfrak{s}$ be a spin-c structure over $Y$ such that $c_1(\mathfrak{s})$ is not torsion. Let $2N$ be the divisibility of $c_1(\mathfrak{s})$. 
Let $R$ be a commutative ring with $1\neq 0$.

In this section, we prove Theorem \ref{thm_U_cycle}.  As we mentioned in the introduction, there are two possible proofs and we give both for completeness.  A central fact, needed for both approaches, is the following non-vanishing result from \cite{cristofaro2021smooth}.

\begin{thm}
\label{thm_non_vanish}
	$\HMbar_*(Y,\frs, c_b;R)\neq 0$ for every $R$. 
\end{thm}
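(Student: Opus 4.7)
The plan is to produce an algebraic model for $\HMbar^*(Y,\frs,c_b;R)$ in which non-vanishing is visible by direct inspection. The guiding point is that, because $c_1(\frs)$ is non-torsion and the perturbation $c_b$ is balanced, the perturbed three-dimensional Seiberg--Witten equations admit only reducible solutions; these are parametrized in a Morse--Bott fashion by the Picard torus $\mathrm{Pic}(Y)\cong H^1(Y;\mathbb{R})/H^1(Y;\mathbb{Z})$, and the bar-version of monopole Floer homology only sees these reducibles.

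First I would apply the blown-up construction of Kronheimer--Mrowka (see \cite[Sections 25, 30]{monopolesBook}), which presents $\HMbar^*$ as the homology of a chain complex built out of the cohomology of $\mathrm{Pic}(Y)$ tensored with a factor from the spectral theory of the family of Dirac operators, the latter giving an $R[U,U^{-1}]$-type contribution. Equivalently, one may use the ``extended cup homology'' identification of Eismeier--Lin \cite[Thm.~6.4]{lin2021monopole}. In either presentation, the differential and the $U$--action are expressed in terms of cup products on $Y$ and the class $c_1(\frs)$, and the underlying $R$-module is free (the construction is $R$-linear at the chain level).

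Next I would exhibit an explicit non-vanishing class. The key point is that $H^*(\mathrm{Pic}(Y);R)$ is a free finitely generated $R$-module with a top-degree generator, while the $R[U,U^{-1}]$-factor is unbounded below in the relative grading. Since the differential has bounded relative degree, a filtration-or weight-argument shows that classes of sufficiently negative $U$-power survive in homology: they cannot lie in the image of the differential for degree reasons, and they are themselves closed. Over a field this reduces to a standard spectral sequence computation collapsing at a finite page. Over a general commutative ring $R$, the same argument applies verbatim once one works with the integral chain model and tensors with $R$, since freeness of the complex ensures that no torsion phenomena interfere.

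The main obstacle is setting up the algebraic model cleanly enough that the statement is genuinely uniform in $R$. I would handle this by working throughout with chain-level models that are free over $\mathbb{Z}$, so that the homology computation is $R$-linear, and by keeping track of the $U$--action as a shift operator on the $R[U,U^{-1}]$-factor rather than as a more subtle geometric operation — in this way the argument avoids any appeal to universal coefficients beyond the observation that tensoring a free complex with $R$ commutes with taking homology.
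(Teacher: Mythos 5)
First, a point of comparison: the paper does not prove Theorem~\ref{thm_non_vanish} at all --- it is imported as an external input from \cite{cristofaro2021smooth}. So your proposal is attempting to supply a proof of a result the paper treats as a black box, and the question is whether your argument actually closes the one step where the difficulty genuinely sits. It does not.

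Your algebraic model is the right one (it is exactly the coupled Morse complex $C_*(\mathbb{T},f;\Gamma_{\xi_1}^R)$ with differential $\bar\partial_1+\bar\partial_3$ that the paper uses later), but the non-vanishing step fails. The claim that classes of ``sufficiently negative $U$-power'' survive because they ``cannot lie in the image of the differential for degree reasons'' is not correct: the differential is $R[T,T^{-1}]$-linear and $U$ acts, up to lower filtration terms, as multiplication by $T^{-1}$, so applying a large power of $U$ to a chain commutes with the differential and buys nothing. Moreover the grading on $\HMbar$ is only a relative $\mathbb{Z}/2N$-grading, and the homology of the local-system complex is entirely $(T^N-1)$-torsion: by Lemma~\ref{lem_compute_H(T)}, $H_*(\mathbb{T};\Gamma_{\xi_1}^R)\cong R[T,T^{-1}]/(T^N-1)\otimes_R H_*(\mathbb{T}';R)$, a finitely generated $R$-module. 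There is no surviving free $R[U,U^{-1}]$-factor ``unbounded below'' in homology, so no class survives on degree grounds. The actual content of the theorem is that the higher differential $\bar\partial_3$ --- equivalently the $d_3$ of the spectral sequence of Corollary~\ref{cor_SS}, which is governed by the triple cup product on $H^1(Y)$ --- does not annihilate all of $H_*(\mathbb{T};\Gamma_{\xi_1}^R)$. Your observation that over a field the spectral sequence collapses at a finite page is true but irrelevant: collapse is compatible with $E^\infty=0$. When $b_1(Y)\le 2$ the filtration is too short for a degree $-3$ differential to act and the statement is immediate, but for $b_1(Y)\ge 3$ the $d_3$ can be nonzero (this is exactly the situation exploited in Theorem~\ref{not_U_cycle}), and one needs the nontrivial algebraic argument about the cup-product differential carried out in \cite{cristofaro2021smooth}. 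Your proposal does not engage with that step, which is the whole theorem.
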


We now first give the (very) short but (very) indirect proof, using Heegaard Floer homology and the above non-vanishing result.

\begin{proof}[The short and indirect proof]

By the Kutluhan-Lee-Taubes isomorphism \cite{kutluhan2020hf}, we have
$$
\HMbar_*(Y,\frs,c_b;\mathbb{Z}) \cong \HFinfty(Y,\frs;\mathbb{Z}),
$$
and the isomorphism preserves the U-action. By \cite[Lemma 2.3]{ozsvath2004holomorphic}, there exists a positive integer $l$ such that 
$$
(1-U^N)^l \, \HFinfty(Y,\frs;\mathbb{Z}) = 0,
$$
therefore
$$
(1-U^N)^l \, \HMbar_*(Y,\frs,c_b;\mathbb{Z}) = 0.
$$
By the universal coefficient theorem, there is a short exact sequence
\begin{multline*}
	0\to \HMbar_*(Y,\frs,c_b;\mathbb{Z})\otimes R \to \HMbar_*(Y,\frs,c_b;R)
	\\
	\to \Tor_*(\HMbar_*(Y,\frs,c_b;\mathbb{Z}), R)\to 0.
$$
\end{multline*}
Since $(1-U^N)^l$ acts as the zero map on $\HMbar_*(Y,\frs,c_b;\mathbb{Z})\otimes R$ and $\Tor_*(\HMbar_*(Y,\frs,c_b;\mathbb{Z}), R)$, we conclude that $(1-U^N)^{2l}$ acts as the zero map on $\HMbar_*(Y,\frs,c_b;R)$. It then follows from Theorem \ref{thm_non_vanish} that there exists 
$$0\neq \sigma \in \HMbar_*(Y,\frs,c_b;R)$$ such that $U^N\sigma = \sigma$.
\end{proof}

Of course, the above proof relies on the Kutluhan-Lee-Taubes isomorphism of monopole Floer homology and Heegaard Floer homology, making a more direct proof highly desirable.  We now give that more direct proof, which will require a little more work.

\begin{proof}[The direct proof]

The proof proceeds in two steps.

{\em Step 1.  Preliminaries.}

Let $\mathbb{T} = H^1(Y;\mathbb{R})/H^1(Y;\mathbb{Z})$ be the Picard torus of $Y$. Since $c_1(\frs)$ is non-torsion, we have $\dim \mathbb{T}\ge 1$. Let $\xi_1 \in H^1(\mathbb{T};\mathbb{Z}) = \text{Hom}(H^1(Y;\mathbb{Z}),\mathbb{Z})$ be given by
\begin{equation}
\label{eqn_def_xi1}
	 a \mapsto \frac{1}{2} (a \cup c_1(\mathfrak{s})) [Y].
\end{equation}
Let $\Gamma_{\xi_1}^R$ be the local system on $\mathbb{T}$ with fiber $R[T,T^{-1}]$ where the holonomy around a loop $\gamma$ is multiplication by $T^k$ for $k = \xi_1[\gamma]$.

The following result is proved by Kronheimer and Mrowka \cite[Theorem 35.1.6]{monopolesBook}\footnote{The original statement of \cite[Theorem 35.1.6]{monopolesBook} was given for $R=\mathbb{Z}$, but the same argument applies to arbitrary coefficient rings.}.

\begin{prop}[{Kronheimer--Mrowka, \cite[Theorem 35.1.6]{monopolesBook}}]
\label{prop_KM_book}
	 The Floer homology group with balanced perturbation
$\HMbar_*(Y, \mathfrak{s}, c_b;R)$ is isomorphic to the homology of a chain complex $(C,\bar{\partial}_1+ \bar{\partial}_3),$ where 
$f$ is a Morse function on $\mathbb{T}$,
$$
C = C_*(\mathbb{T},f;\Gamma_{\xi_1}^R)
$$
is the Morse complex of $\mathbb{T}$ defined by $f$ with local coefficient $\Gamma_{\xi_1}$, the map $\bar\partial_1$ is the Morse differential of $C_*(\mathbb{T},f;\Gamma_{\xi_1}^R)$, and $\bar\partial_3$ has degree $-3$ with respect to the grading given by the Morse indices of critical points of $f$.
\end{prop}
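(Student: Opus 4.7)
The plan is to interpret $\HMbar_*(Y,\mathfrak{s},c_b;R)$ via Morse--Bott Floer theory on the reducible locus and then reduce the resulting complex to a finite-dimensional Morse complex on the Picard torus $\mathbb{T}$.

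First I would show that, under the balanced condition on the perturbation, the set of gauge equivalence classes of reducible solutions to the perturbed Seiberg--Witten equations forms a Morse--Bott critical submanifold diffeomorphic to $\mathbb{T}$. The key point is that reducibles correspond to connections whose curvature matches the perturbed reference two-form, and the balanced condition is designed precisely so that this equation can be solved along the entire Picard torus. I would then choose a generic Morse function $f$ on $\mathbb{T}$ as a secondary perturbation, so the perturbed critical set reduces to $\mathrm{Crit}(f)$, with Morse indices inherited from $f$.

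Next I would identify the local coefficient system. A closed loop $\gamma \subset \mathbb{T}$ based at a critical point of $f$ gives a family of Dirac operators whose spectral flow controls the shift in the absolute $\mathbb{Z}$-grading of the base generator. Recording this shift by a power of $T$ produces a local system of $R[T,T^{-1}]$-modules; an Atiyah--Singer/APS index computation on $Y\times S^1$ identifies the spectral flow with $\xi_1([\gamma])=\tfrac12([\gamma]\cup c_1(\mathfrak{s}))[Y]$, once we use $H_1(\mathbb{T};\mathbb{Z})\cong H^1(Y;\mathbb{Z})$. This is precisely the local system $\Gamma_{\xi_1}^R$ appearing in the statement.

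Finally, I would analyze the differential. The main term $\bar\partial_1$ counts gradient flow lines of $f$ on $\mathbb{T}$ weighted by $\Gamma_{\xi_1}^R$-parallel transport; these arise from Floer trajectories that stay on the reducible locus. The remaining contributions come from cascades: broken trajectories that briefly depart from the reducible locus along normal spinor directions, accrue extra grading shifts from the $S^1$-equivariant normal bundle (whose fiber is $\mathbb{CP}^\infty$-like), and re-enter. A grading bookkeeping using this normal geometry shows the leading nontrivial correction has degree $-3$, giving $\bar\partial_3$. The main obstacle is justifying that the differential truncates at this term and establishing compactness and transversality for the cascade moduli spaces; this is the serious part, handled via the blown-up configuration space and Kuranishi analysis of Kronheimer--Mrowka. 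Once the complex is in place, a standard continuation argument identifies its homology with $\HMbar_*(Y,\mathfrak{s},c_b;R)$.
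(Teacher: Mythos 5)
First, a point of comparison: the paper does not prove this proposition at all. It is quoted from Kronheimer--Mrowka \cite[Theorem 35.1.6]{monopolesBook}, and the only thing the paper adds is a footnote observing that the argument given there for $R=\mathbb{Z}$ applies verbatim over an arbitrary coefficient ring. So your proposal is really a sketch of Kronheimer--Mrowka's Sections 33--35, and should be judged as such. Your outline does capture several genuine features of that argument: the reducible critical locus fibering over the Picard torus $\mathbb{T}$ under the balanced condition, the auxiliary Morse function $f$, and the identification of the monodromy of the local system with the spectral flow $\xi_1 = \tfrac12(\,\cdot\,\cup c_1(\frs))[Y]$ of the family of Dirac operators.

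However, two steps are wrong or missing. First, your mechanism for $\bar\partial_3$ --- cascades that ``briefly depart from the reducible locus along normal spinor directions'' --- is not how the bar flavor works: $\HMbar$ counts only trajectories contained in the reducible (boundary) stratum of the blown-up configuration space, and no excursion into the irreducible locus occurs. Kronheimer--Mrowka package this as \emph{coupled Morse homology}: generators are pairs $(p,[\varphi])$ with $p\in\mathrm{Crit}(f)$ and $[\varphi]$ a projectivized eigenvector of the operator family $L|_p$, and $\bar\partial_3$ counts solutions of the coupled equations lying over Morse flow lines with index drop $3$, the spinor equation supplying the compensating shift in the $T$-grading. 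Second, and more seriously, the truncation of the differential at degree $-3$ is not a matter of ``grading bookkeeping using the normal geometry'': a priori the coupled Morse differential has the form $\bar\partial_1+\bar\partial_3+\bar\partial_5+\cdots$, and since $\dim\mathbb{T}=b_1(Y)$ can be large there is no dimensional reason for the higher terms to vanish. The truncation is a theorem, obtained by homotoping the classifying map of the family $(\mathcal{H},L)$ so that it factors through $\UU(2)\simeq S^1\times S^3$ (this is where the degree-$1$ class $\xi_1$ and the triple cup product enter), after which only $\bar\partial_1$ and $\bar\partial_3$ survive. Without this input your argument gives no control over $\bar\partial_5,\bar\partial_7,\dots$, and it is exactly this $\UU(2)$ structure that the rest of the paper (Proposition \ref{prop_U=T^-1_dim_le_3} and Remark \ref{rmk_homotopy_coupled_Morse}) relies on. Finally, you do not address the one point the paper itself adds, namely the passage from $\mathbb{Z}$ to general $R$.
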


Let $(C,\bar{\partial}_1+ \bar{\partial}_3)$ be as in Proposition \ref{prop_KM_book}.
For each $s\in \mathbb{Z}^{\ge 0}$, let $\mathcal{F}_sC$ be the $R[T,T^{-1}]$--submodule of $C$ generated by the critical points of $f$ with Morse indices no greater than $s$. Then $\mathcal{F}$ defines a filtration on the chain complex 
$(C,\bar{\partial}_1+ \bar{\partial}_3)$.  We will abuse notation and use $\mathcal{F}$ to denote the induced filtration on the homology of $(C,\bar{\partial}_1+ \bar{\partial}_3).$ Then Proposition \ref{prop_KM_book} has the following corollary (see, for example, \cite[Section 5.4]{weibel1995introduction}):
\begin{cor}
\label{cor_SS}
	The filtration $\mathcal{F}$ defines a spectral sequence $S$, where 
\begin{enumerate}
	\item the $E^2$ page\footnote{This page is called the $E^1$ page in the convention of \cite[Section 5.4]{weibel1995introduction}, and is called the $E^2$ page in the convention of \cite[Section 34]{monopolesBook}. We follow the convention of \cite{monopolesBook} here.} of $S$ is $H_*(\mathbb{T};\Gamma_{\xi_1}^R)$,
	\item $S$ converges to the associated-graded space of $H_*(C,\bar{\partial}_1+ \bar{\partial}_3)$ defined by the filtration $\mathcal{F}$. 
	\qed
\end{enumerate}   
\end{cor}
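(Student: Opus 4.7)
The plan is to derive the corollary as a textbook application of the spectral sequence of a bounded filtered chain complex, applied to $(C,\bar\partial_1+\bar\partial_3)$ equipped with the filtration $\mathcal{F}$ by Morse index. Two things need checking: boundedness of the filtration (to guarantee convergence) and identification of the first non-trivial page.

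For boundedness, I would note that $\mathbb{T}=H^1(Y;\mathbb{R})/H^1(Y;\mathbb{Z})$ is a closed manifold of finite dimension $n=b_1(Y)$, so every critical point of the Morse function $f$ has index in $\{0,1,\dots,n\}$. Consequently $\mathcal{F}_{-1}C=0$ and $\mathcal{F}_nC=C$, i.e. the filtration is bounded. Standard theory (e.g.\ Weibel, Section 5.4) then produces a spectral sequence converging to the associated-graded space of $H_*(C,\bar\partial_1+\bar\partial_3)$ under the induced filtration, which is item (2) of the corollary.

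For the $E^2$ page, I would use the fact that $\bar\partial_1$ strictly lowers Morse index by $1$ while $\bar\partial_3$ strictly lowers it by $3$. Therefore each piece of the differential strictly decreases the filtration, so the differential induced on the associated graded $\mathrm{gr}_\mathcal{F} C$ vanishes and the spectral sequence begins with $E^1=\mathrm{gr}_\mathcal{F} C$ (in the KM indexing convention specified in the footnote). Passing to the next page by the usual recipe $d^1[x]=[dx\bmod \mathcal{F}_{p-2}C]$ isolates the summand of $\bar\partial_1+\bar\partial_3$ that drops the filtration by exactly one, which is precisely the Morse differential $\bar\partial_1$. Taking homology gives
\[
E^2 \;=\; H_*\bigl(\mathrm{gr}_\mathcal{F} C,\bar\partial_1\bigr) \;=\; H_*(\mathbb{T};\Gamma_{\xi_1}^R),
\]
establishing item (1).

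I do not foresee any serious obstacle; the only thing to be careful about is the indexing convention (the KM book and Weibel disagree by a shift in the page numbering), but the footnote in the statement already reconciles this. The proof itself is a direct invocation of the bounded filtered-complex spectral sequence, with no additional analytic input required beyond the structural properties of $\bar\partial_1$ and $\bar\partial_3$ already established in Proposition \ref{prop_KM_book}.
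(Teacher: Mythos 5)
Your proposal is correct and is essentially the argument the paper has in mind: the paper gives no separate proof, simply marking the corollary as an immediate consequence of the standard spectral sequence of a bounded filtered complex (citing Weibel, Section 5.4), and your verification — boundedness from $\dim\mathbb{T}=b_1(Y)$, vanishing of the induced differential on the associated graded, identification of the first nontrivial differential with $\bar\partial_1$ so that the relevant page is the Morse homology $H_*(\mathbb{T};\Gamma_{\xi_1}^R)$ — is exactly the standard chain of deductions being invoked.
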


The next thing we need is the following lemma.

\begin{lem}
\label{lem_compute_H(T)}
Let $\mathbb{T}'$ be a torus with dimension $b_1(Y)-1$. Then 
	$$H_*(\mathbb{T};\Gamma_{\xi_1}^R)\cong R[T,T^{-1}]/(T^N-1)\otimes_R H_*(\mathbb{T}';R)$$
	as $R[T,T^{-1}]$--modules. 
\end{lem}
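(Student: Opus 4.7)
The plan is to reduce the computation to the case of a circle after observing that $\xi_1$ is $N$ times a primitive class, then apply Künneth.

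First I would unpack the integer $N$. Since $c_1(\mathfrak{s})$ has divisibility $2N$ (modulo torsion), there exists a primitive class $e \in H^2(Y;\mathbb{Z})/\text{tors}$ with $c_1(\mathfrak{s}) \equiv 2Ne$ mod torsion. Because $a\cup (\text{torsion})[Y]$ is torsion in $H^3(Y;\mathbb{Z})=\mathbb{Z}$ and hence zero, the formula \eqref{eqn_def_xi1} gives $\xi_1(a) = N(a\cup e)[Y]$. Poincaré duality, in the form of the perfect pairing $H^1(Y;\mathbb{Z})/\text{tors} \otimes H^2(Y;\mathbb{Z})/\text{tors}\to \mathbb{Z}$, together with the primitivity of $e$, implies that $\eta := (\cdot \cup e)[Y]$ is a primitive element of $H^1(\mathbb{T};\mathbb{Z}) = \text{Hom}(H_1(\mathbb{T};\mathbb{Z}),\mathbb{Z})$. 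Thus $\xi_1 = N\eta$ with $\eta$ primitive.

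Because $\eta$ is primitive, we can extend it to a basis of $H^1(\mathbb{T};\mathbb{Z})$, which produces a splitting $\mathbb{T} \cong S^1 \times \mathbb{T}'$, where $\mathbb{T}'$ has dimension $b_1(Y)-1$ and $\eta$ is the composition of the projection onto $S^1$ with the standard generator of $H^1(S^1;\mathbb{Z})$. Under this splitting, the local system $\Gamma_{\xi_1}^R$ is the pullback under the first projection of the local system $\Gamma_N^R$ on $S^1$ with holonomy $T^N$; equivalently, it is the external tensor product of $\Gamma_N^R$ with the trivial local system $R$ on $\mathbb{T}'$.

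Next I would compute $H_*(S^1;\Gamma_N^R)$ directly. The cellular chain complex with local coefficients has the form
\[
R[T,T^{-1}] \xrightarrow{\,T^N-1\,} R[T,T^{-1}].
\]
Since $T^N-1$ is a non-zero-divisor in the Laurent polynomial ring $R[T,T^{-1}]$ (easy to see by comparing the highest- and lowest-degree coefficients of a putative annihilator), this map is injective. Hence $H_0(S^1;\Gamma_N^R) = R[T,T^{-1}]/(T^N-1)$ and $H_1(S^1;\Gamma_N^R) = 0$.

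Finally I would invoke the Künneth theorem for local coefficients. Since $\Gamma_N^R$ on $S^1$ has homology concentrated in a single degree where it equals the \emph{free} $R$-module $R[T,T^{-1}]/(T^N-1)$ (free as an $R$-module because $\{1,T,\dots,T^{N-1}\}$ is a basis), the Tor terms vanish and we obtain
\[
H_*(\mathbb{T};\Gamma_{\xi_1}^R) \;\cong\; H_*(S^1;\Gamma_N^R)\otimes_R H_*(\mathbb{T}';R) \;\cong\; R[T,T^{-1}]/(T^N-1)\otimes_R H_*(\mathbb{T}';R),
\]
as $R[T,T^{-1}]$-modules, with $R[T,T^{-1}]$ acting on the first factor. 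The main nontrivial input is the primitivity of $\eta$ coming from Poincaré duality; once this is in hand, the rest is the cellular computation on $S^1$ together with Künneth.
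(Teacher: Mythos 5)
Your proof is correct, and it follows essentially the same route as the computation the paper delegates to its citations (\cite[(70)]{cristofaro2021smooth} and \cite[p.~688]{monopolesBook}): write $\xi_1=N\eta$ with $\eta$ primitive, split off a circle factor of $\mathbb{T}$ carrying all the holonomy, compute that factor by the two-term complex $T^N-1$, and finish with K\"unneth. You have in addition supplied exactly the details the paper waves at with ``generalizes to arbitrary rings in a verbatim way'' --- the Poincar\'e-duality argument for primitivity of $\eta$, the non-zero-divisor observation for $T^N-1$ over a general $R$, and the $R$-freeness of $R[T,T^{-1}]/(T^N-1)$ that makes the K\"unneth Tor terms vanish --- so nothing further is needed.
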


\begin{proof}[Sketch of proof]
When $R=\mathbb{Z}$, this was proved in \cite[(70)]{cristofaro2021smooth}, see also \cite[Page 688]{monopolesBook}. The computation generalizes to arbitrary rings in a verbatim way.
\end{proof}

{\em Step 2.  The proof.}

We will prove the theorem using the properties of coupled Morse homology introduced in \cite[Section 33]{monopolesBook}.

Recall that Proposition \ref{prop_KM_book} is proved in \cite{monopolesBook}  by identifying the bar-version of monopole Floer homology with a \emph{coupled Morse homology} on $\mathbb{T}$, which was defined in \cite[Section 33]{monopolesBook}.  More precisely, there is a bundle $\mathcal{H}$ of (complex) Hilbert spaces over $\mathbb{T}$ and a smooth family of self-adjoint operators $L$ on $\mathcal{H}$ such that the spectrum of $L$ at each point of $\mathbb{T}$ is discrete and is unbounded from above and below. Moreover, at each critical point of $f$, the spectrum of $L$ is disjoint from $\{0\}$ and every eigenvalue of $L$ has multiplicity 1.
The generators of the chain complex $C = C_*(\mathbb{T},f;\Gamma_{\xi_1})$ as a $\mathbb{Z}$--module are
identified with the set of pairs $(p,[\varphi])$, where $p$ is a critical point of $f$, $\varphi$ is an eigenvector of $L|_p$, and $[\varphi]$ is the image of $\varphi$ in the projectivized space of $\mathcal{H}|_p$. The operator $T$ maps a pair $(p,[\varphi])$ to $(p,[\varphi'])$, where the eigenvalue of $\varphi'$ is next to and higher than the eigenvalue of $\varphi$.

Recall that the U-map on $\HMbar_*(Y, \mathfrak{s}, c_b)$ is defined by counting \emph{reducible} flow lines with index 2 (see \cite[(25.6)]{monopolesBook}), so it can be defined on coupled Morse homology.  The U-map on coupled Morse homology is defined as follows.  
Since $\mathbb{T}$ is finite dimensional, we can choose a generic nowhere-vanishing smooth section $s$ of the dual bundle of $\mathcal{H}$. 
We then have a chain map defined by counting index-2 solutions to the flow line equations \cite[(33.10a), (33.10b)]{monopolesBook} such that the spinor $\varphi$ at time $0$ satisfies $s(\varphi|_{t=0})=0$. The U-map is its induced map on homology. See the discussion after \cite[Proposition 33.3.8]{monopolesBook} for the definition of the U-map in the torsion case.

Suppose now that there is an index-2 solution to \cite[(33.10a), (33.10b)]{monopolesBook} from a generator $(p,[\varphi])$ to a generator $(q,[\psi])$. Then \cite[(33.10a)]{monopolesBook} implies that there is a Morse gradient flow from $p$ to $q$. Since the flow lines satisfy the Morse-Smale condition, the Morse index of $p$ is greater than the Morse index of $q$ as critical points of $f$, and we have $p=q$ if they have the same Morse index. When $p=q$, the total counting of solutions is equal to 1 if the eigenvalue of $\varphi$ is higher than and next to the eigenvalue of $\psi$, and the counting is equal to zero otherwise (see \cite[(25.15)]{monopolesBook} and the equation on the bottom of \cite[Page 664]{monopolesBook}).

Recall that $\mathcal{F}$ is the filtration on $C$ defined by the Morse indices of critical points of $f$. 
 By the previous argument, for each $x\in \mathcal{F}_sC$, the action of $U$ on $x$ is equal to $T^{-1} x + y$, where $y\in \mathcal{F}_{s-1}C$. 

Now we invoke the spectral sequence $S$ from Corollary \ref{cor_SS}. 
By the discussion above, the action of $U$ on the $E^2$ page of $S$ is equal to the action of $T^{-1}$.  By Lemma \ref{lem_compute_H(T)}, we have $T^{-N} = 1$ on $H_*(\mathbb{T}; \Gamma_{\xi_1}^R)$. Therefore the action of $U^N$ is the identity map on $H_*(\mathbb{T}; \Gamma_{\xi_1}^R)$. This implies that the action of $U^N$ is the identity on every page of $S$ after the $E^2$ page. As a consequence, $U^N$ is the identity on 
$$
\frac{\mathcal{F}_s H_*(C,\bar\partial_1 +\bar\partial_3)}
{\mathcal{F}_{s-1} H_*(C,\bar\partial_1 +\bar\partial_3)}
$$ 
for each $s$. 
By Theorem \ref{thm_non_vanish}, we have 
$$H_*(C,\bar\partial_1 +\bar\partial_3)\neq 0,$$
therefore there exists 
$$0\neq \sigma\in H_*(C,\bar\partial_1 +\bar\partial_3) \cong \HMbar_*(Y,\mathfrak{s},c_b;R)$$
such that $U^N\sigma = \sigma$.
\end{proof}

\begin{rmk}
The above proof implies that
\begin{equation}
\label{eqn_quantitative_cyclic_result_HM_bar}
	(1-U^N)^{b_1(Y)+1} \, \HMbar_*(Y,\mathfrak{s},c_b;R) = 0.
\end{equation}
This is because $U^N$ acts as the identity map on 
$$
\frac{\mathcal{F}_s H_*(C,\bar\partial_1 +\bar\partial_3)}
{\mathcal{F}_{s-1} H_*(C,\bar\partial_1 +\bar\partial_3)}
$$ 
and the filtration $\mathcal{F}$ has height $b_1(Y)+1$. 

If $\HPbar_*(\phi,\Gamma;\mathbb{R})$ is the periodic Floer homology of an area-preserving map $\phi$ on a closed surface $\Sigma$ with genus $g$, we have 
$$
\langle c_1(\frs_\Gamma),[\Sigma]\rangle = 2(d-g+1)
$$
where $d$ is the degree of $\Gamma$. Therefore, $d-g+1$ is an integer multiple of  $N$. When the degree of $\Gamma$ is sufficiently large, we have
$$
	\HPbar_*(\phi,\Gamma;R)\cong \HMbar^{-*}(M_\phi,\mathfrak{s}_\Gamma,c_b;R),
$$
 therefore \eqref{eqn_quantitative_cyclic_result_HM_bar} implies 
\begin{equation}
\label{eqn_quantitative_cyclic_result_HP}
(U^{d-g+1}-1)^{b_1(M_\phi)+1}\, \HPbar_*(\phi,\Gamma;R) = 0.
\end{equation}
\end{rmk}

\subsection{Existence of non-cyclic elements}
\label{sec_non_cyclic}
This section proves Theorem \ref{not_U_cycle}.

We will follow the notation from \cite[Sections 33-35]{monopolesBook} on coupled Morse homology. 
Recall the following construction from \cite[Section 33.2]{monopolesBook}. 
For each $z\in \UU(k)$, 
let $C^\infty(S^1;z)$ be the space of smooth functions $h:\mathbb{R}\to \mathbb{C}^k$ satisfying 
$h(t + 1) = zh(t).$ Define
$$\|h\|_{L^2}^2 = \int_0^1 |h(t)|^2\,dt,$$
$$\|h\|_{L_1^vu2}^2 = \int_0^1 |h(t)|^2+|h'(t)|^2\,dt.$$
Let $H(z)$, $H_1(z)$ be the completions of $C^\infty(S^1;z)$ using the $L^2$ norm and the $L_1^2$ norm respectively. Consider 
\begin{align*}
L_{\UU(k)}(z): H_1(z) &\to H(z)\\
  h &\mapsto -i\frac{d}{dt} h.
\end{align*}
Then $H(z)$ defines a bundle $\mathcal{H}_{\UU(k)}$ of Hilbert spaces over $\UU(k)$ and $L$ is a family of self-adjoint operators on $\mathcal{H}_{\UU(k)}$. The pair $(\mathcal{H}_{\UU(k)},L_{\UU(k)})$ is called the \emph{canonical} family on $\UU(k)$.

A key ingredient in the proof of Theorem  \ref{not_U_cycle} is Proposition \ref{prop_U=T^-1_dim_le_3} below. This result was essentially also independently observed by Eismeier--Lin (in the paragraph above \cite[Proposition 6.1]{lin2021monopole}), without giving detailed proof:

\begin{prop}
\label{prop_U=T^-1_dim_le_3}
	Suppose $Q$ is a closed manifold with dimension less than or equal to $3$, and suppose $(\mathcal{H},L)$ is given by the pull-back of the canonical family on $\UU(2)$. Then after homotoping the family $(\mathcal{H},L)$ on $Q$ and with a suitable choice of Morse function $f$, we have $U=T^{-1}$ on the coupled Morse homology $\bar H_*(Q,L)$. 
\end{prop}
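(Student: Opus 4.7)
The plan is to exhibit a chain-level representative of the coupled Morse complex in which the $U$-map literally equals $T^{-1}$, after a suitable homotopy of the family $(\mathcal{H},L)$ and choice of Morse function $f$ on $Q$. Since coupled Morse homology and both the $U$- and $T$-actions on it are invariant under homotopy of the data (Morse function, metric, section $s$, family $(\mathcal{H},L)$), this will imply the desired equality on $\bar H_*(Q,L)$.

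The starting point is Step~2 of the direct proof of Theorem~\ref{thm_U_cycle}: exactly the same analysis shows that, on the coupled Morse chain complex, one has $U = T^{-1} + R$, where $R$ counts index-$2$ reducible coupled flow lines from $(p,[\varphi])$ to $(q,[\psi])$ with $p \neq q$ subject to $s(\varphi(0)) = 0$, and $R$ strictly decreases the Morse-index filtration. Everything thus reduces to arranging that $R \equiv 0$ at the chain level.

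To exploit the hypothesis that $(\mathcal{H},L)$ is pulled back from the canonical family on $U(2)$ and that $\dim Q \le 3$, I would first invoke the homotopy equivalence $U(2) \simeq S^1 \times SU(2)$ and decompose the classifying map $g \colon Q \to U(2)$ up to homotopy as $(g_1, g_2)$ with $g_1 \colon Q \to S^1$ and $g_2 \colon Q \to SU(2) \simeq S^3$. Because $\dim Q \le 3 = \dim S^3$, the map $g_2$ can be homotoped so as to be constant outside an arbitrarily small ball $B \subset Q$. Over $Q \setminus B$ the pulled-back family then factors through the maximal torus $T^2 \subset U(2)$ and splits as a direct sum of two pulled-back canonical $U(1)$ families, to which the Kronheimer-Mrowka analysis in the proof of Theorem~\ref{thm_U_cycle} applies verbatim; with a generic choice of $s$ and $f$ following \cite[Section~35]{monopolesBook} one arranges that no index-$2$ flow line between distinct critical points meets $s^{-1}(0)$, so the $U(1)$ contributions to $R$ already vanish at the chain level. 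Next, perturb $f$ so that all of its critical points lie in $Q \setminus B$ and its gradient flow spends vanishingly short time inside $B$; by the openness of the condition $s \neq 0$ and by transversality, any index-$2$ flow line that merely passes through $B$ can be perturbed to miss $s^{-1}(0)$ at time zero. Together these give $R \equiv 0$.

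The main obstacle will be carrying out the localization of $g_2$ on $B$ together with the perturbations of $f$ and $s$ compatibly, while preserving both the simple-spectrum condition on $L$ at critical points and the nowhere-vanishing property of $s$. An alternative and potentially cleaner route is to identify the leading-order obstruction to $R \equiv 0$ with a characteristic class of the pulled-back family lying in $H^{\ge 4}(Q;R)$ (coming from an eigenbundle Chern class), which automatically vanishes once $\dim Q \le 3$; the parenthetical remark preceding \cite[Proposition~6.1]{lin2021monopole} suggests Eismeier-Lin have precisely such an argument in mind.
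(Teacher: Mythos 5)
Your overall strategy --- use $\dim Q\le 3$ and the homotopy type of $\UU(2)\simeq S^1\times S^3$ to push the ``interesting'' part of the classifying map away from the index-$2$ flow loci, and then argue that over the remaining region, where the family is pulled back from a circle, the off-diagonal contribution to $U$ vanishes --- is the same as the paper's. But the central step is missing. You assert that over the circle-pulled-back region, ``with a generic choice of $s$ and $f$\,\dots\, no index-$2$ flow line between distinct critical points meets $s^{-1}(0)$,'' citing the analysis from the proof of Theorem~\ref{thm_U_cycle}. That analysis only shows the off-diagonal term strictly decreases the Morse-index filtration; it says nothing about its vanishing. Genericity alone cannot give what you want: the parametrized moduli space of index-$2$ solutions from $(a,[\varphi])$ to $(b,[\psi])$ is $2$-dimensional and the incidence condition $s(\phi(0))=0$ is codimension $2$, so for generic $s$ the signed count is a finite, well-defined number which is precisely the matrix entry of $U$ and is independent of $s$ up to chain homotopy; it cannot be made zero by perturbing $s$. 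The mechanism that actually kills this term is structural, not generic: one constructs a connection $\nabla$ on $\mathcal{H}_{\UU(1)}$ whose parallel transport preserves the eigenspace decomposition of $L_{\UU(1)}$, and shows via \cite[Proposition 33.3.2]{monopolesBook} that every relevant solution of the spinor flow equation over a path in (a tubular neighborhood of) $\UU(1)$ is projectively parallel and hence remains an eigenvector of $L$ for all time. The evaluation at $t=0$ therefore lands in the eigenvector locus, a set of real dimension $1$ over the circle; since vanishing of $s$ on a given complex line is a codimension-$2$ condition, $s$ can be chosen (pulled back from $\UU(1)$) to be nonzero on every eigenvector. It is this constraint on the image of the evaluation map, not transversality of the moduli space to $s^{-1}(0)$, that forces the off-diagonal count to be zero.

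A second, smaller gap is your treatment of flow lines passing through the bad ball $B$: ``perturbing them to miss $s^{-1}(0)$ at time zero'' is not available, because translating a parametrized flow line sweeps its $t=0$ point over the entire trajectory, and again the resulting count is invariant under such perturbations. What you need is that the full loci $M(a,b)$ with $\ind a = \ind b +2$ avoid the region where the family fails to be pulled back from the circle; the paper arranges this by observing that the complement of the tubular neighborhood of $\UU(1)$ in $\UU(2)$ retracts onto a codimension-$3$ subset and homotoping the classifying map accordingly. (One could alternatively repair your version by choosing $B$ disjoint from the finitely many $2$-dimensional sets $\overline{M(a,b)}$.) With these two repairs your outline coincides with the paper's argument; your proposed ``characteristic class in $H^{\ge 4}$'' alternative is only a heuristic and is not developed far enough to assess.
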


\begin{proof}
Consider the canonical family $\mathcal{H}_{\UU(1)}$ over $\UU(1)$. Let $1\in\UU(1)$ denote the identity element. 
For each $h\in C^\infty(S^1;1)$, the map 
$$
r \mapsto (t \mapsto e^{irt} h(t))
$$
defines a lifting of 
\begin{align*}
\mathbb{R} &\to \UU(1) \\ 
r &\mapsto e^{ir}.
\end{align*}
There exists a unique connection $\nabla$ on $\mathcal{H}_{\UU(1)}$ such that the above lifting is flat for all $h$.  
Locally, the bundle $\mathcal{H}_{\UU(1)}$ decomposes as a direct sum of eigenspaces of $L_{\UU(1)}(z)$, and the parallel translations of $\nabla$ preserve this decomposition.

Suppose $\gamma:\mathbb{R}\to\UU(1)$ is a path on $\UU(1)$ such that 
$$\lim_{s\to -\infty} \gamma(s) = a\in \UU(1)$$ 
and
$$\lim_{s\to +\infty} \gamma(s) = b\in \UU(1).$$ 
Also assume that $a,b\neq 1$ so that $0$ is not an eigenvalue of $L_{\UU(1)}(a)$ or $L_{\UU(1)}(b)$. Let $p(a)$ be an eigenvector of $L_{\UU(1)}(a)$ and $p(b)$ be an eigenvector of $L_{\UU(1)}(b)$.  By \cite[Proposition 33.3.2]{monopolesBook}, the moduli space of solutions to \cite[(33.9(b))]{monopolesBook} from $[p(a)]$ to $[p(b)]$ over $\gamma$ has dimension zero if and only if $[p(b)]$ is the image of $[p(a)]$ under the parallel translation of $\nabla$ with respect to $\gamma$. In this case, if $\phi$ is a section of $\gamma^*(\mathcal{H}_{\UU(1)})$ from $[p(a)]$ to $[p(b)]$ that solves \cite[(33.9(b))]{monopolesBook}, then $\phi/|\phi|$ is flat with respect to $\gamma^*\nabla$ and $\phi(s)$ is an eigenvector of $L_{\UU(1)}(\gamma(s))$ for every $s\in \mathbb{R}$.

Now let $Q$ be the manifold in the statement of the proposition. By the assumptions, $Q$ is closed and $\dim Q\le 3$. Let $(\mathcal{H}_{\UU(2)},L_{\UU(2)})$ be the canonical family over $\UU(2)$.   Let
$$v:Q\to \UU(2)$$
 be a smooth map
such that the pair $(\mathcal{H},L)$ on $Q$ is given by the pull-back of $(\mathcal{H}_{\UU(2)},L_{\UU(2)})$ via $v$. We study the coupled Morse homology of $(\mathcal{H},L)$.

By \cite[Proposition 33.2.2]{monopolesBook}, the restriction of $(\mathcal{H}_{\UU(2)},L_{\UU(2)})$ to $\UU(1)$ is homotopic to $(\mathcal{H}_{\UU(1)},L_{\UU(1)})$. 
Let $N(\UU(1))\subset \UU(2)$ be a tubular neighborhood of $\UU(1)$. Then $N(\UU(1))\cong D^2\times \UU(1)$. Let $\pi: N(\UU(1))\to \UU(1)$ be the projection map. 
Homotope $(\mathcal{H}_{\UU(2)},L_{\UU(2)})$ on $\UU(2)$ so that 
\begin{equation}
\label{eqn_homotope_canonical_family_U(2)}
	(\mathcal{H}_{\UU(2)},L_{\UU(2)})|_{N(\UU(1))} = (\pi^*(\mathcal{H}_{\UU(1)})\oplus L_0,\pi^*(L_{\UU(1)})\oplus L_0)
\end{equation}
where $L_0$ is a constant self-adjoint operator on some fixed Hilbert space $H_0$. Here we abuse notation and use $(\mathcal{H}_{\UU(2)},L_{\UU(2)})$ to also denote the pair after homotopy. Although \cite[Proposition 33.2.2]{monopolesBook} already implies that \eqref{eqn_homotope_canonical_family_U(2)} can be achieved with $(H_0,L_0)$ being zero, we will give an argument that is valid for arbitrary $(H_0,L_0)$. This flexibility will be needed later (see Remark \ref{rmk_homotopy_coupled_Morse} below).

Let $\mathcal{H}_{\UU(1)}^*$ be the dual bundle of $\mathcal{H}_{\UU(1)}$. 
Let $s_0$ be a section of 
$$\mathcal{H}_{\UU(1)}^*\oplus H_0^*$$
 such that $s(e)\neq 0$ whenever $e$ is a non-zero eigenvector of $L_{\UU(1)}\oplus L_0$. This condition holds for generic $s$ because $s(e)\neq 0$ is a codimension 2 condition and $\UU(1)$ has dimension 1. 
Let $s$ be a section of the dual bundle of $\mathcal{H}_{\UU(2)}$ such that $s=\pi^*(s_0)$ on $N(\UU(1))$.

Let $f$ be a fixed Morse function on $Q$. 
Notice that $\UU(2)$ is diffeomorphic to $S^3\times S^1$, where $\UU(1)\subset \UU(2)$ is isotopic to $\{\pt\}\times S^1$.  Therefore, the complement of $N(\UU(1))$ in $\UU(2)$ retracts to $S^1$. 
Since $S^1$ has codimension $3$ in $U^2$, 
 we can homotope the map $v$ so that if $a,b$ are two critical points of $f$ on $Q$ such that $\ind a = \ind b+2$, then $v(M(a,b))\subset N(\UU(1))$. Here, $M(a,b)\subset Q$ denotes the locus of negative gradient flow lines of $f$ from $a$ to $b$.

 Now we can compute the $U$ map on the coupled Morse homology $\bar H_*(Q,L)$. Since $\dim Q\le 3$, the $U$ map is decomposed to two parts.  The first part comes from the flow lines that project to the constant map on $Q$.  This part contributes to a term $T^{-1}$.  The second part comes from the flow lines whose projections to $Q$ are Morse flow lines on $Q$ from a critical point $a$ to a critical point $b$ such that $\ind a = \ind b+2$.  The contribution of the second part is zero since we always have $s(\phi(0))\neq 0$ with the above choice of $s$ for every solution of \cite[(33.9(b))]{monopolesBook}. In conclusion, we have $U=T^{-1}$.
 \end{proof}
 
 \begin{proof}[Proof of Theorem \ref{not_U_cycle}]
 Since $b_1(Y)=3$ and $H^1(Y;\mathbb{Z})$ has no torsion, we have $H^1(Y;\mathbb{Z})\cong \mathbb{Z}^3$. Let $a_1,a_2,a_3$ be a basis for $H^1(Y;\mathbb{Z})$, and suppose 
 $\langle a_1\cup a_2\cup a_3, [Y]\rangle = m$. By the assumptions, we have $m\neq 0$. 

By \cite[Equation (35.5)]{monopolesBook}, the Floer homology group 
$$\HMbar^*(Y,\frs,c_b;\mathbb{Z})\cong \HMbar_*(-Y,\frs,c_b;\mathbb{Z})$$ 
is given by the co-kernel of a map from $\mathbb{Z}^4\otimes\mathbb{Z}[T^{-1},T]$ to $\mathbb{Z}^4\otimes\mathbb{Z}[T^{-1},T]$ defined by the matrix
\begin{equation}
\label{eqn_b1(Y)=3_differential}
\begin{pmatrix}
T^N-1 & 0 & 0 & -mT \\
0 & T^N-1 & 0 & 0 \\
0 & 0 & T^N-1 & 0 \\
0 & 0 & 0 & T^N-1	
\end{pmatrix}.
\end{equation}
The extra negative sign in front of the term $mT$ comes from the fact that we are computing the Floer \emph{cohomology} of $Y$ instead of \emph{homology}.  By Proposition \ref{prop_U=T^-1_dim_le_3}, the action of $U$ is equal to the multiplication by $T^{-1}$. 

For each non-zero integer $i$, let $\sigma_i$ be the element represented by the column vector $(0,0,0,i)^t$ in the co-kernel of \eqref{eqn_b1(Y)=3_differential}. Then we have $n(T^N-1)\sigma_i \neq 0$ for every non-zero integer $n$, and $(T^N-1)^2\sigma_i = 0$. 

Suppose $(T^k-1)\sigma_i=0$ for some non-zero integer $k$. Then we have $(T^{|k|}-1)\sigma_i=0$. Let $\ell = gcd(N,|k|)>0$. Then $T^{\ell}-1$ is the greatest common divisor of $(T^N-1)^2$ and $T^{|k|}-1$ in 
 $\mathbb{Q}[T]$. Since $\mathbb{Q}[T]$ is a PID, there exist $u,v\in \mathbb{Q}$ such that
$$
u (T^N-1)^2 + v (T^{|k|}-1) = T^\ell -1.
$$
So there exist $u',v'\in \mathbb{Z}$ and a non-zero integer $n$ such that
$$
u' (T^N-1)^2 + v' (T^{|k|}-1) = n(T^\ell -1).
$$
As a consequence,
$$
n(T^\ell -1)\sigma_i = u' (T^N-1)^2\sigma_i + v' (T^{|k|}-1)\sigma_i = 0.
$$
Since $T^N-1$ is a multiple of $T^\ell-1$ in $\mathbb{Z}[T]$, this implies $n(T^N -1)\sigma_i=0$, which yields a contradiction. 

In conclusion, $(T^{k}-1)\sigma_i\neq 0$ for every non-zero integer $k$.
It is straightforward to verify that $\sigma_i\neq \sigma_j$ in the co-kernel of \eqref{eqn_b1(Y)=3_differential} whenever $i\neq j$. Therefore the desired result is proved.
 \end{proof}

 \begin{rmk}
 \label{rmk_homotopy_coupled_Morse}
 	 In \cite{monopolesBook}, it is proved that 
 	 $\bar H_*(Q,L)$ 
 is invariant under homotopies as a $\mathbb{Z}[U]$--module, but it is not proved that $\bar H_*(Q,L)$ is invariant as a $\mathbb{Z}[T]$--module. Our proof of Proposition \ref{prop_U=T^-1_dim_le_3} only shows that  $U=T^{-1}$ after homotoping $v$ and the canonical family over $\UU(2)$, and with a suitable choice of the Morse function $f$ and the connection $\nabla$. The computation of \eqref{eqn_b1(Y)=3_differential} is a consequence of \cite[Proposition 34.4.1]{monopolesBook} (see Proposition \ref{prop_KM_book} above), whose proof also requires homotoping $v$ and the canonical family on $\UU(2)$, and a choice of the Morse function. A closer look at the proof shows that the homotopies of $v$ and the canonical family on $\UU(2)$ used in \cite[Proposition 34.4.1]{monopolesBook} satisfy all the conditions we needed for Proposition \ref{prop_U=T^-1_dim_le_3} in a verbatim way (the neighborhood $N(\UU(1))$ in the proof of Proposition \ref{prop_U=T^-1_dim_le_3} corresponds to the complement of $S^1\times W$ in the proof of \cite[Proposition 34.4.1]{monopolesBook}). One can choose $f$ so that 
 \begin{enumerate}
 	\item If $a,b$ are two critical points of $f$ on $Q$ such that $\ind a = \ind b+2$, then $v(M(a,b))\subset N(\UU(1))$,
 	\item $f$ equals the pull-back of the projection from 
 	 $$\UU(2)\backslash N(\UU(1))\cong [0,1]\to S^3$$ via $v$ on the pre-image of $\UU(2)\backslash N(\UU(1))$.
 \end{enumerate}
In this case, the Morse function $f$ satisfies the conditions for both \cite[Proposition 34.4.1]{monopolesBook} and Proposition \ref{prop_U=T^-1_dim_le_3}.  Therefore, Proposition \ref{prop_U=T^-1_dim_le_3} indeed implies $U=T^{-1}$ if we identify $\HMbar^*(Y,\frs,c_b;\mathbb{Z})$ with the co-kernel of \eqref{eqn_b1(Y)=3_differential}.
 \end{rmk}
 
\bibliographystyle{alpha}
\bibliography{references}

\end{document}